\newtheorem*{claim*}{Claim}
\newtheorem*{fact*}{Fact}
\newtheorem{theorem}{Theorem}[section]
\newtheorem*{theorem*}{Theorem}
\newtheorem{proposition}[theorem]{Proposition}
\newtheorem{lemma}[theorem]{Lemma}
\newtheorem*{lemma*}{Lemma}
\newtheorem{corollary}[theorem]{Corollary}
\theoremstyle{definition}
\newtheorem{definition}[theorem]{Definition}
\theoremstyle{remark}
\def\ZZ{\mathbb{Z}}
\newcommand{\abs}[1]{\left\lvert#1\right\rvert}
\newcommand{\floor}[1]{\left\lfloor #1 \right\rfloor}
\newcommand{\ceil}[1]{\left\lceil #1 \right\rceil}
\def\({\left(}
\def\){\right)}
\def\x{\times}
\def\<={\Leftarrow}
\def\=>{\Rightarrow}
\title{Constructing MSTD Sets Using Bidirectional Ballot Sequences}
\author{Yufei Zhao \\[6pt]
\small Department of Mathematics, Massachusetts Institute of Technology\\
\small 77 Massachusetts Ave., Cambridge, MA 02139, USA \\
\small {\tt yufeiz@mit.edu} }
\tikzstyle{P}=[draw,circle, fill=black, minimum size=6pt,inner sep=0pt]
\newcommand{\up}{-- ++(1) node[P]{}}
\newcommand{\down}{-- ++(0) node[P]{}}
\begin{document}

\maketitle

\begin{abstract}
A more sums than differences (MSTD) set is a finite subset $S$ of the integers such that $\abs{S+S} > \abs{S-S}$.
We construct a new dense family of MSTD subsets of $\{0, 1, 2, \dots, n-1\}$. Our construction gives $\Theta(2^n/n)$ MSTD sets, improving the previous best construction with $\Omega(2^n/n^4)$ MSTD sets by Miller, Orosz, and Scheinerman.
\bigskip

\noindent Keywords: MSTD, sumset, difference set, bidirectional ballot sequence \\
\noindent 2000 Mathematics Subject Classification: 11P99, 05A16
\end{abstract}


\section{Introduction}

A more sums than differences (MSTD) set is a finite set $S$ of integers with $\abs{S + S} > \abs{S - S}$, where the sum set $S + S$ and the difference set $S - S$ are defined as
\begin{align*}
	S + S & = \{s_1 + s_2 : s_1, s_2 \in S\} \\
	S - S &= \{s_1 - s_2 : s_1, s_2 \in S\}.
\end{align*}
Since addition is commutative while subtraction is not, two distinct integers $s_1$ and $s_2$ generate one sum but two differences. This suggests that $S+S$ should ``usually'' be smaller than $S - S$. Thus we expect MSTD sets to be rare.

The first example of an MSTD was found by Conway in the 1960's: $\{0,2,3,4,7,11,12,14\}$. The name MSTD was later given by Nathanson \cite{Na}. MSTD sets have recently become a popular research topic \cite{Hegarty, HM, MO, MOS, Na2, Na, Zhao:finite, Zhao:limit}. For older papers see \cite{HRY, Marica, PF, Roesler, Ru1, Ru2, Ru3}. We refer the reader to \cite{Na2, Na} for the history of the problem.

Let $\rho_{n-1} 2^{n}$ be the number of MSTD subsets of $\{0, 1, 2, \dots, n- 1\}$. We refer to $\rho_n$ informally as the \emph{density} of the family of MSTD sets. This quantity was first studied by Martin and O'Bryant \cite{MO}, who showed that $\rho_n \geq 2 \x 10^{-7}$ for $n \geq 14$. However, this bound is far from optimal. Recently, the author \cite{Zhao:limit} showed that $\rho_n$ converges to a limit, and computed a lower bound of $4 \x 10^{-4}$ for this limit. From Monte Carlo experiments, we expect limiting density to be about $4.5 \x 10^{-4}$ \cite{MO}.

The proofs of the lower bounds on $\rho_n$ are non-constructive. On the other hand, infinite families of MSTD sets were constructed by Hegarty \cite{Hegarty}, Nathanson \cite{Na}, and Miller, Orosz, and Scheinerman \cite{MOS}. In particular, Miller et al.~gave the densest construction in terms of the number of subsets of $\{0, 1, \dots, n-1\}$; their construction has density $\Omega(1/n^4)$.

In this paper, we offer a new construction of an infinite family of MSTD sets. Our construction, described in Section \ref{sec:construction}, has density $\Theta(1/n)$, improving the previous result of Miller et al.~\cite{MOS} In Section \ref{sec:bidirectional} we prove that our family of MSTD sets has the claimed size. In the process we introduce a new combinatorial object called bidirectional ballot sequence, whose additional properties are discussed in Section \ref{sec:conclusion}.


\section{Construction of MSTD sets} \label{sec:construction}

We use $[a, b]$ to denote the set $\{a, a+1, \dots, b\}$. In this section we describe our construction of a new family of MSTD subsets of $[0, n-1]$.

The first idea used in our construction is similar to the techniques used in both \cite{MO} and \cite{MOS}; namely we look for sets of the form
\[
	S = L \cup M \cup R,
\]
where
\begin{align*}
L &= S \cap [0, \ell-1], \\
M &= S \cap [\ell, n - r-1], \\
R &= S \cap [n-r, n-1].
\end{align*}
We will fix $L$ and $R$ to be sets with certain desirable properties and let $M$ vary.

\begin{figure}[ht]
	\centering
	$S = $
	\begin{tabular}{|l|l|l|}
	\hline
	 \hspace{.2in}$L$ \hspace{.2in} & \hspace{.4in} $M$ \hspace{.4in} & \hspace{.2in} $R$  \hspace{.2in} \\ 
	\hline
	\end{tabular}
	
	$S + S = $
	\begin{tabular}{|l|l|l|}
	\hline
	 \hspace{.3in}$L+L$ \hspace{.3in} & \hspace{1.3in} ? \hspace{1.3in} & \hspace{.2in} $R+R$  \hspace{.2in} \\ 
	\hline
	\end{tabular}
	
	$S - S = $
	\begin{tabular}{|l|l|l|}
	\hline
	 \hspace{.3in}$L-R$ \hspace{.3in} & \hspace{1.3in} ? \hspace{1.3in} & \hspace{.2in} $R-L$  \hspace{.2in} \\ 
	\hline
	\end{tabular}
	
	\caption{Illustration of the construction of $S$.}
\end{figure}

For instance, adapting the construction from \cite{MO} and taking $\ell = r = 11$ and
\begin{align}
	L &= \{0, 2, 3, 7, 8, 9, 10\} \label{eq:MSTD-L}, \\
	R &= \{n-11, n-10, n-9, n-8, n-6, n-3, n-2, n-1\},  \label{eq:MSTD-R} 
\end{align}
we have
\[
	L + L = [0, 20] \setminus \{1\}, \qquad R + R = [2n - 22, 2n -2].
\]
On the other hand, $S-S$ is missing at least two differences, namely $\pm(n-7)$, so $\abs{S - S} \leq 2n - 3$. If we can get $S+S$ to contain $[21, 2n-23]$ (i.e., all the middle sums not yet covered by $L +L$ or $R + R$), then $S + S$ is only missing the sum $1$, and thus $\abs{S + S} = 2n - 2$, thereby making $S$ an MSTD set.

So our goal is to choose $M$ so that $S + S$ is not missing any sums in the middle segment, i.e., $[21, 2n - 23]$. From the probabilistic argument of \cite{MO}, we know that the set of all $M$'s with this property occupies a positive lower density of all subsets of $[11, n - 12]$. However, that proof is non-constructive.

Note that if $M+M$ is not missing any sums (i.e., $M+M = [2\cdot 11, 2(n - 12)]$), then $S$ has the desired properties. This condition forces $11, n - 12 \in M$, so that $21, 2n-23 \in S + S$ as well. Let us temporarily do some re-indexing so that the problem becomes finding subsets $M$ of $[1, m]$ such that $M + M = [2, 2m]$. Note that the probabilistic argument of \cite{MO} also shows that the set of such $M$'s has at least positive constant density.

The construction of \cite{MOS} is as follows: let $M$ contain all $k$ elements on each of its two ends (i.e., $[1, k] \cup [m-k+1, n] \subset M$), and furthermore let $M$ have the property that it does not have a run of more than $k$ consecutive missing elements. Here $k$ is allowed to vary. This construction gives a density of $\Omega(1/n^4)$.

We use a different approach to construct $M$. The property of $M$ that we seek is the following: for every prefix and suffix of $[1, m]$, more than half of the elements are in $M$. The following lemma proves that this constraint is sufficient for our purposes.

\begin{lemma}
If $M \subset [1, m]$ satisfies 
\[
	\abs{M \cap [1, k]} > \frac{k}{2}, \qquad \text{and} \qquad \abs{M \cap [m-k+1, m]} > \frac k 2
\]
for every $0 < k\leq m$, then $M + M = [2, 2m]$.
\end{lemma}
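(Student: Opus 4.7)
The plan is to show every target $s \in [2, 2m]$ lies in $M + M$ by a pigeonhole argument applied to the involution $a \mapsto s - a$, splitting into two symmetric cases according to whether $s$ is small or large. For small $s$ (say $s \leq m+1$) I will use the prefix hypothesis; for large $s \geq m+1$ the suffix hypothesis. Both cases apply when $s = m+1$, so the union covers all of $[2, 2m]$.

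In the small case, fix $s \leq m+1$ and set $k = s-1 \in [1, m]$. Any representation $s = a + b$ with $a, b \in [1,m]$ automatically has $a, b \in [1, k]$, so the relevant slice of $M$ is $A = M \cap [1, k]$, which by hypothesis has $\abs{A} > k/2$. Now consider the reflected set $s - A = \{s - a : a \in A\}$; because $a \mapsto s - a$ is an involution of $[1, k]$ when $s = k+1$, this reflected set is also contained in $[1, k]$ and also has more than $k/2$ elements. Since $[1, k]$ has exactly $k$ elements, $A$ and $s - A$ cannot be disjoint, and any common element $a$ gives $a, s-a \in M$, i.e., $s \in M + M$.

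The large case $s \geq m+1$ is entirely symmetric: set $k = 2m - s + 1 \in [1, m]$, so that feasible summands lie in $[s-m, m] = [m-k+1, m]$, and apply the suffix hypothesis to $A = M \cap [m-k+1, m]$. The map $a \mapsto s - a$ is again an involution of this interval (since the endpoints sum to $s$), so $A$ and $s - A$ are two subsets of a $k$-element interval each of size exceeding $k/2$, forcing a common element and hence a representation $s = a + (s-a)$ with both summands in $M$.

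The only point requiring care is checking that $a \mapsto s-a$ really does preserve the chosen interval in each case, which follows directly from the choice of $k$. There is no substantial obstacle beyond this bookkeeping: the proof is essentially a one-line density/pigeonhole argument once the correct interval of feasible summands has been isolated.
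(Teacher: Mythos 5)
Your proof is correct and follows essentially the same route as the paper's: for each target sum, restrict to the interval of feasible summands (a prefix for small sums, a suffix for large ones), observe that the reflection $a \mapsto s-a$ preserves that interval, and conclude by pigeonhole that $M$ and its reflection must overlap there. The paper's version is just a terser statement of the same argument.
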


\begin{proof}
Let $2 \leq x \leq 2m$. If $x \leq m$, then since $M$ contains more than half of the elements in $[1, x-1]$, by the pigeonhole principle, there is some $y$ so that $y, x-y \in M$, so that $x \in M + M$. Similarly, if $x > m$, then since $M$ contains more than half of the elements in $[x-m, m]$, we can find some $x-y, y \in M$ so that $m \in M + M$ as well.
\end{proof}

The construction of this new family of MSTD sets is summarized in the theorem below.

\begin{theorem} \label{thm:MSTD-constr}
Let $n \geq 24$. Moreover, let $M$ be a subset of $[11, n-12]$ with the property that every prefix and every suffix of the interval $[11, n-12]$ has more than half of its elements in $M$. Then $S = L \cup M \cup R$ is an MSTD set, where $L$ and $R$ are given in \eqref{eq:MSTD-L} and \eqref{eq:MSTD-R}. The number of MSTD sets of $\{0, 1, \dots, n-1\}$ in this family is $\Theta(2^n/n)$.
\end{theorem}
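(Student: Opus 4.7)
The plan is to split the proof into two independent parts: first, to show that every $S = L \cup M \cup R$ satisfying the hypothesis is MSTD; second, to count the valid $M$'s.

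For the MSTD property I aim for $|S + S| = 2n - 2$ together with $|S - S| \leq 2n - 3$. I will assemble the sum set from four pieces: the already-computed $L + L = [0, 20] \setminus \{1\}$ and $R + R = [2n - 22, 2n - 2]$; the equality $M + M = [22, 2n - 24]$ from the preceding lemma, after identifying $[11, n - 12]$ with $[1, n - 22]$; and the two intermediate sums $21$ and $2n - 23$, supplied by $L + M$ and $M + R$ once we note that the majority hypotheses on the prefix and suffix of length one force $11, n - 12 \in M$. Together these pieces give $S + S = [0, 2n-2] \setminus \{1\}$; the exclusion of $1$ follows from direct inspection that $1 \notin L \cup M \cup R$.

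To control the difference set I will show $\pm(n-7) \notin S - S$. For a positive difference $s_1 - s_2 = n - 7$, I case-split on which of $L, M, R$ contains each of $s_1, s_2$. Internal differences are too small; the mixed $(M, L)$ and $(R, M)$ cases lead to the contradictions $s_1 \geq n - 7 > n - 12$ and $s_1 \geq n + 4 > n - 1$, respectively; and the remaining $(R, L)$ case reduces to a short finite check using the explicit $L, R$ of \eqref{eq:MSTD-L}--\eqref{eq:MSTD-R}. The negative difference is symmetric, and combining with the sum set calculation yields the MSTD conclusion.

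For the counting, I encode each $M \subseteq [11, n-12]$ by its indicator binary string of length $m := n - 22$. The prefix and suffix majority conditions on $M$ translate directly into the condition that every prefix and every suffix of the binary string contains strictly more $1$'s than $0$'s, i.e., that the string is a \emph{bidirectional ballot sequence} in the sense of Section \ref{sec:bidirectional}. The theorem's claimed $\Theta(2^n / n)$ count thereby reduces to the enumeration $\Theta(2^m / m)$ for these sequences, which is deferred to that section. I expect the main obstacle of the theorem to lie here: the upper bound $O(2^m/m)$ should follow from a relatively short argument that a typical binary sequence fails the prefix majority condition near the middle, whereas the matching lower bound will plausibly require an explicit construction or a generating-function/inductive argument.
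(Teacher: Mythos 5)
Your proposal is correct and follows essentially the same route as the paper: the same assembly of $S+S = [0,2n-2]\setminus\{1\}$ from $L+L$, $R+R$, the lemma giving $M+M=[22,2n-24]$, and the two boundary sums forced by $11, n-12 \in M$; the same missing differences $\pm(n-7)$ giving $\abs{S-S} \leq 2n-3$; and the same bijection between the admissible $M$'s and bidirectional ballot sequences of length $n-22$, with the $\Theta(2^m/m)$ enumeration deferred to Section \ref{sec:bidirectional}.
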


To prove the last assertion in the theorem, we need to count the number of sets in our family. This is done in the next section.


\section{Bidirectional ballot sequence} \label{sec:bidirectional}

In order to study the sizes of our new families of MSTD sets, we introduce the following combinatorial construction.

\begin{definition}
A 0-1 sequence of length $n$ is a \emph{bidirectional ballot sequence} if every prefix and suffix contains strictly more $1$'s than $0$'s. The number of bidirectional ballot sequences of length $n$ is denoted $B_n$.
\end{definition}

Recall that a classical \emph{ballot sequence} is a 0-1 sequence where we only require that every prefix has more 1's than 0's. 
A bidirectional ballot sequence is then a ballot sequence whose reverse is also a ballot sequence. This construction appears to be new. Table \ref{tab:B_n} gives some values of $B_n$. At the time of this writing, the sequence $B_n$ was not found on the Sloane On-Line Encyclopedia of Integer Sequences \cite{Sloane}.

\begin{table}[ht!]
\caption{Number of bidirectional ballot sequences of length $n$.\label{tab:B_n}}
\begin{tabular}{ccccccccccccc}
	\toprule
	$n$ & 1 & 2 & 3 & 4 & 5 & 6 & 7 & 8 & 9 & 10 & 11 & 12 \\ 
	
	$B_n$ & 1 & 1 & 1 & 1 & 2 & 3 & 5 & 9 & 15 & 28 & 49 & 91 \\ 
	\midrule
	$n$ & 13 & 14 & 15 & 16 & 17 & 18 & 19 & 20 & 21 & 22 & 23 & 24 \\ 
	
	$B_n$ & 166 & 307 & 574 & 1065 & 2016 & 3769 & 7176 & 13532 & 25842 & 49113 & 93995 & 179775 \\ 
	\bottomrule
\end{tabular}
\end{table}

It is easy to see that the possibilities for the set $M$ in the construction in Theorem \ref{thm:MSTD-constr} correspond bijectively with bidirectional ballot sequences of length $n-22$. Then, the proof of the final assertion in the theorem is equivalent to the following result about the number of bidirectional ballot sequences of a given length.

\begin{proposition} \label{prop:B_n-weak}
The number of bidirectional ballot sequences satisfies $B_n = \Theta\( 2^{n} / n\)$.
\end{proposition}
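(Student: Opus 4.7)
The plan is to reduce the problem to counting lattice walks in a strip, then apply spectral analysis of the transition operator. To each length-$n$ BBS $(b_1, \ldots, b_n)$, associate the walk $(S_0, S_1, \ldots, S_n)$ with $S_0 = 0$ and steps $\pm 1$ according to the $b_i$. The prefix condition becomes $S_i \geq 1$ for $1 \leq i \leq n$; since the net sum of the suffix after position $i$ is $S_n - S_i$, the suffix condition becomes $S_n > S_i$ for $0 \leq i \leq n - 1$. Writing $K = S_n$, the first and last steps are forced ($S_1 = 1$ and $S_{n-1} = K - 1$), so for $K \geq 2$ the number $h(n, K)$ of BBS with terminal height $K$ equals the number of $\pm 1$ walks from $1$ to $K - 1$ of length $n - 2$ staying in the strip $[1, K - 1]$, and $B_n = \sum_{K \geq 1} h(n, K)$.

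Next I would diagonalize the transition operator on $\{1, \ldots, K - 1\}$ with Dirichlet boundary conditions (eigenvalues $2 \cos(\pi r / K)$ for $r = 1, \ldots, K - 1$, eigenvectors $\sqrt{2/K}\sin(\pi r j/K)$). Using the identity $\sin(\pi r(K-1)/K) = (-1)^{r+1} \sin(\pi r/K)$, this yields
\[
h(n, K) = \frac{2}{K} \sum_{r = 1}^{K - 1} (-1)^{r+1} \bigl(2 \cos(\pi r/K)\bigr)^{n - 2} \sin^2(\pi r/K).
\]
Since $n \equiv K \pmod 2$, the $r = 1$ and $r = K - 1$ terms agree, while the remaining terms are smaller by a factor of at least $\bigl(\cos(2\pi/K)/\cos(\pi/K)\bigr)^{n-2}$, which is exponentially small once $n$ is moderately larger than $K^2$. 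Using $\cos(\pi/K) \approx 1 - \pi^2/(2K^2)$ and $\sin(\pi/K) \approx \pi/K$, the leading behavior is
\[
h(n, K) \sim \frac{\pi^2 \cdot 2^n}{K^3} \exp\!\Bigl(-\frac{\pi^2 n}{2 K^2}\Bigr),
\]
maximized around $K = \Theta(\sqrt n)$ and decaying super-polynomially for both $K \ll \sqrt n$ and $K \gg \sqrt n$.

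The upper bound $B_n = O(2^n/n)$ is obtained by bounding each $h(n, K)$ by (a constant times) its leading term and summing over $K$; the lower bound by restricting to $K \in [\sqrt n, 2\sqrt n]$ and using the leading term alone (since the sub-leading contributions are suppressed by a definite factor). In both directions the sum is a discrete approximation to the Gaussian-type integral
\[
2^n \int_0^\infty \frac{\pi^2}{K^3} e^{-\pi^2 n/(2K^2)}\, dK \;=\; 2^n \int_0^\infty \pi^2 u\, e^{-\pi^2 n u^2/2}\, du \;=\; \frac{2^n}{n}
\]
(via the substitution $u = 1/K$), yielding $B_n = \Theta(2^n/n)$. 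The main technical obstacle is uniformly controlling the alternating tail of the spectral sum across all $K$, especially in the intermediate regime $K \asymp \sqrt n$ where several terms are of comparable order and the alternation could in principle cause cancellation; this is resolved by bookkeeping of the ratios $\cos(\pi r/K)/\cos(\pi/K)$ for $2 \leq r \leq K - 2$ (which at $K \approx \sqrt n$ already introduces a factor of roughly $e^{-3\pi^2/2}$ per power), supplemented by a separate trivial estimate $h(n, K) \leq \binom{n}{(n+K)/2}$ for small $K$, whose aggregate contribution to $B_n$ is negligible compared with $2^n/n$.
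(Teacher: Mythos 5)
Your reduction is sound: classifying bidirectional ballot walks by terminal height $K$ does turn $h(n,K)$ into the number of walks of length $n-2$ from $1$ to $K-1$ confined to the strip $[1,K-1]$, your spectral formula for $h(n,K)$ is correct (I checked it at $n=5$, $K=3$), and this is a genuinely different route from the paper's. Your lower bound essentially works: after pairing $r$ with $K-r$, for $K$ between $\sqrt n$ and $2\sqrt n$ the consecutive-term ratio of the alternating sum is at most about $4e^{-3\pi^2(n-2)/(2K^2)}\le 4e^{-3\pi^2/8}<1$, so $h(n,K)$ is at least a constant times its first term, and summing over these $K$ gives $\Omega(2^n/n)$. (The paper instead glues two $\floor{\sqrt n}$-bounded ballot walks and counts those by one reflection.)

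The gap is in your upper bound, and you have misdiagnosed where the difficulty sits. First, the fallback $h(n,K)\le\binom{n}{(n+K)/2}$ ``for small $K$'' is nowhere near negligible: summed over even $O(\sqrt n)$ values of $K$ it gives $\Theta(2^n)$, not $o(2^n/n)$; for small $K$ you need the confinement decay, e.g.\ $h(n,K)\le(2\cos(\pi/K))^{n-2}$. Second, and more seriously, the uniform claim $h(n,K)=O\bigl(2^nK^{-3}e^{-\pi^2 n/(2K^2)}\bigr)$ fails exactly where you need it: once $K$ exceeds roughly $3.3\sqrt n$ the terms of the paired alternating sum are no longer decreasing in $r$ (they grow up to $r\asymp K/\sqrt n$), so no alternating-series bound applies, and the $r=1$ term is then a gross overestimate of $h(n,K)$ --- at $K=n$ it gives $\pi^2 2^n/n^3$ while $h(n,n)=1$ --- so it cannot serve as an asymptotic there either; the true decay in that regime is the endpoint Gaussian $e^{-K^2/(2n)}$, not $e^{-\pi^2 n/(2K^2)}$. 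In the window $3.3\sqrt n\lesssim K\lesssim\sqrt{n\log n}$ neither your spectral bound nor the binomial bound closes the argument (even the one-sided ballot refinement $\frac{K}{n}\binom{n}{(n+K)/2}$ sums to $\Theta(2^n/\sqrt n)$ over $K\ge A\sqrt n$); you would need an extra input, e.g.\ a second reflection showing the bridge stays below its endpoint with probability $O(K/n)$. Note that the paper's upper bound sidesteps all of this in one line: cutting a bidirectional ballot walk at its midpoint yields a ballot walk followed by a reversed ballot walk, and ballot walks of length $m$ number $\binom{m-1}{\ceil{m/2}-1}=O(2^m/\sqrt m)$, whence $B_n=O(2^n/n)$. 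Your route, if the large-$K$ tail were properly controlled, would buy more than the paper proves --- the sharp constant $B_n\sim 2^{n-2}/n$ --- but as written the upper bound has a hole.
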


This rest of this section contains a proof of Proposition \ref{prop:B_n-weak}.

We can interpret 0-1 sequences in terms of lattice walks, where we start at the origin and take steps of the form $(1, 1)$ and $(1, -1)$, corresponding to the terms $1$ and $0$ in the sequence, respectively. Let a \emph{ballot walk} (resp.~\emph{bidirectional ballot walk}) be such a lattice walk corresponding to a ballot sequence (resp.~bidirectional ballot sequence). So, a ballot walk is a lattice walk with the property that the starting point is the unique lowest point, and a bidirectional ballot walk has the additional property that the ending point is the unique highest point. See Figure \ref{fig:halves} for an example.

The key idea in the proof of Proposition \ref{prop:B_n-weak} is to divide a bidirectional ballot walk into two halves, as in Figure \ref{fig:halves}. The second half should be ``reversed,'' i.e., viewed with a $180^\circ$ rotation. For the upper bound, we notice that each half is necessarily a ballot walk. For the lower bound, we need some sufficient condition on the two halves so that neither ``overshoots'' the other when the two halves are glued together.

\begin{figure}
	\centering
	\begin{tikzpicture}[scale = .4]
	\coordinate (1) at (1,1);
	\coordinate (0) at (1,-1);
	\draw[thick] (0,0) node[P] {} 
	 \up\up\down\up\up\down\up\up\down\up\down\down\up\up\up\up\up\down\up\up;
	 \draw (-.5,0) -- ++(21,0);
	 \draw (-.5,8) -- ++(21,0);
	 \draw[red, very thick, dashed] (10, 0) -- ++(0,8);
	\end{tikzpicture}
	\caption{A bidirectional ballot walk corresponding to the sequence $11011011010011111011$.
	         The middle dashed line divides the walk into two halves.\label{fig:halves}}
\end{figure}
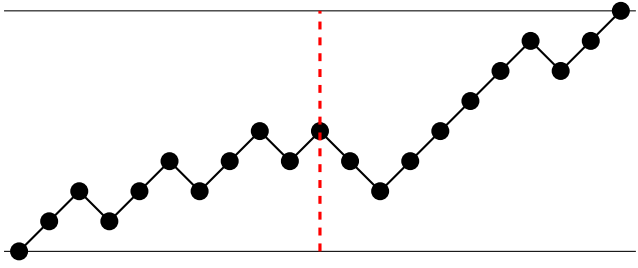

Let us recall the following classic theorem about ballot sequences (e.g., see \cite{ballot}).

\begin{theorem}[Ballot Theorem]
Let $p > q$. The number of ballot sequences with $p$ 1's and $q$ 0's, or equivalently the number of ballot walks with $p$ steps of the form $(1,1)$ and $q$ steps of the form $(1,-1)$, is equal to 
\[ 
	\binom{p+q-1}{p-1} - \binom{p+q-1}{p} = \frac{p-q}{p+q}\binom{p+q}{p}.
\]
\end{theorem}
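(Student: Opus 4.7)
The plan is to prove the Ballot Theorem by the classical reflection argument (André's method), which turns the counting problem into a simple subtraction of two unrestricted binomial counts.

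First I would recast the statement in terms of lattice walks, exactly as set up in the preceding paragraph. A ballot sequence with $p$ ones and $q$ zeros corresponds to a lattice path from $(0,0)$ to $(p+q,\,p-q)$ with steps $(1,1)$ and $(1,-1)$, and the prefix condition ``every prefix has strictly more $1$'s than $0$'s'' translates to the path being strictly above the $x$-axis at every point after the start. In particular the first step must be an up-step, landing the path at $(1,1)$.

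The main calculation is then to count paths from $(1,1)$ to $(p+q,\,p-q)$ that never touch the line $y=0$. The total number of (unrestricted) paths from $(1,1)$ to $(p+q,\,p-q)$ is $\binom{p+q-1}{p-1}$, since among the remaining $p+q-1$ steps, $p-1$ must be up-steps. It remains to count the \emph{bad} paths, those from $(1,1)$ to $(p+q,\,p-q)$ that touch $y=0$ at some point. For this I would apply the reflection principle: given such a bad path, reflect the portion of it up to the first time it touches $y=0$ across the $x$-axis. This gives a bijection between bad paths and unrestricted paths from $(1,-1)$ to $(p+q,\,p-q)$, of which there are $\binom{p+q-1}{p}$ (now $p$ of the $p+q-1$ remaining steps must be up). Subtracting gives the first form of the answer,
\[
\binom{p+q-1}{p-1}-\binom{p+q-1}{p}.
\]

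The second form follows from routine algebra. Writing $\binom{p+q-1}{p-1}=\tfrac{p}{p+q}\binom{p+q}{p}$ and $\binom{p+q-1}{p}=\tfrac{q}{p+q}\binom{p+q}{p}$, the difference simplifies to $\tfrac{p-q}{p+q}\binom{p+q}{p}$, completing the proof.

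The only step that requires any care is verifying that the reflection described is actually a bijection; this amounts to checking that every path from $(1,-1)$ to $(p+q,p-q)$ must cross $y=0$ (true since its endpoints lie on opposite sides of the axis, so we can reflect back up to the first crossing), so the inverse map is well defined. Once that is established, everything else is bookkeeping.
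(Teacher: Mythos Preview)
Your argument via Andr\'e's reflection principle is correct and is the standard proof of this classical result. Note, however, that the paper does not actually supply its own proof of the Ballot Theorem: it is stated as a known fact with a reference (``e.g., see \cite{ballot}'') and then used to derive Corollary~\ref{cor:ballot}. So there is nothing in the paper to compare your approach against; your write-up simply fills in what the paper leaves to the literature.
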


\begin{corollary} \label{cor:ballot}
Let $0 \leq a < b$ be real numbers. The number of ballot walks with $n$ steps and whose final height is inclusively between $a$ and $b$ is 
\[
	\binom{n-1}{\ceil{\frac12(a + n)} - 1} - \binom{n-1}{\floor{\frac12(b + n)}}. \qedhere
\]
\end{corollary}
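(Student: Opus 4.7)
The plan is to apply the Ballot Theorem to each admissible final height separately and observe that the resulting sum telescopes.

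A lattice walk with $n$ steps and $p$ up-steps and $q$ down-steps ends at height $h = p - q$, so $p = (n+h)/2$ and $q = (n-h)/2$. In particular, $h$ must have the same parity as $n$, and for the walk to be a ballot walk we need $p > q$, i.e., $h > 0$. By the Ballot Theorem, for each such $h$ the number of ballot walks of length $n$ ending at height $h$ equals
\[
    \binom{n-1}{(n+h)/2 - 1} - \binom{n-1}{(n+h)/2}.
\]

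Summing this over all integers $h \in [a,b]$ with $h \equiv n \pmod 2$, I would substitute $k = (n+h)/2$. Since $h = 2k - n$ always has the right parity, the constraint $a \leq h \leq b$ becomes $(a+n)/2 \leq k \leq (b+n)/2$, i.e., $\lceil (a+n)/2 \rceil \leq k \leq \lfloor (b+n)/2 \rfloor$. The total count is then
\[
    \sum_{k = \lceil (a+n)/2 \rceil}^{\lfloor (b+n)/2 \rfloor} \left[ \binom{n-1}{k-1} - \binom{n-1}{k} \right],
\]
which telescopes immediately to the claimed expression $\binom{n-1}{\lceil (a+n)/2 \rceil - 1} - \binom{n-1}{\lfloor (b+n)/2 \rfloor}$.

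The only real subtlety, and where I expect to have to be careful, is the edge case $h = 0$: this can enter the sum when $a = 0$ and $n$ is even, but $h = 0$ does not correspond to a ballot walk (the origin would not be the unique lowest point). This causes no trouble because the Ballot Theorem formula evaluates to $\binom{n-1}{n/2 - 1} - \binom{n-1}{n/2} = 0$ at $k = n/2$, by the symmetry $\binom{n-1}{j} = \binom{n-1}{n-1-j}$, so including the $h=0$ term contributes nothing and the telescoping identity is preserved. Beyond this, the argument is just a bookkeeping exercise, and the corollary follows.
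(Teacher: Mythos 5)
Your proof is correct and takes essentially the same route as the paper: apply the Ballot Theorem at each admissible final height and let the resulting sum telescope. Your explicit handling of the $h=0$ edge case (when $a=0$ and $n$ is even) is a detail the paper leaves implicit, and you resolve it correctly.
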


\begin{proof}
We use the Ballot Theorem and sum over all $(p, q)$ with $p+q=n$ and $a \leq 2p - n \leq b$ to find that the desired quantity is
\[
	\sum_{a \leq 2p - n \leq b} \( \binom{n-1}{p-1} - \binom{n-1}{p} \) = \binom{n-1}{\ceil{\frac12(a + n)} - 1} - \binom{n-1}{\floor{\frac12(b + n)}}. \qedhere
\]
\end{proof}

We will also use the following well-known fact about the normal approximation of binomial coefficients. It can be proved using either Stirling's formula or the Central Limit Theorem.

\begin{proposition} \label{prop:normal}
For any real number $t$,
\begin{equation} \label{eq:normal}
	\lim_{n \to \infty} \frac{\sqrt n}{2^n}\binom{n}{\frac{1}{2}(n + t \sqrt{n})} = \sqrt{\frac{2}{\pi}} e^{-\frac 1 2 t^2}.
\end{equation}
\end{proposition}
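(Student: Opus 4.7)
The plan is to prove this directly from Stirling's formula in the form $n! = \sqrt{2\pi n}(n/e)^n(1 + o(1))$, since the statement is a local (pointwise) asymptotic, not an integrated one. Let $k = k_n$ denote the nearest integer to $\tfrac12(n + t\sqrt n)$, so that $k = n/2 + t\sqrt n/2 + O(1)$ and $n - k = n/2 - t\sqrt n/2 + O(1)$; the rounding error contributes only multiplicative factors of $1 + O(1/n)$ everywhere below. Applying Stirling separately to $n!$, $k!$, and $(n-k)!$ in $\binom{n}{k} = n!/(k!(n-k)!)$ and collecting terms gives
\[
\binom{n}{k} = (1 + o(1)) \cdot \frac{1}{\sqrt{2\pi}} \sqrt{\frac{n}{k(n-k)}} \cdot \frac{n^n}{k^k (n-k)^{n-k}}.
\]
Since $k(n-k) = (n^2/4)(1 - t^2/n + O(n^{-1/2}))$, the square-root prefactor equals $(2/\sqrt n)(1 + o(1))$.

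The heart of the argument is the asymptotic of the exponential factor $n^n/(k^k (n-k)^{n-k})$. Writing $k/n = \tfrac12 + \tfrac{t}{2\sqrt n} + O(1/n)$ and $(n-k)/n = \tfrac12 - \tfrac{t}{2\sqrt n} + O(1/n)$ and pulling out the $(1/2)^n$ yields
\[
\frac{n^n}{k^k (n-k)^{n-k}} = (1 + o(1)) \cdot 2^n \left(1 + \tfrac{t}{\sqrt n}\right)^{-k} \left(1 - \tfrac{t}{\sqrt n}\right)^{-(n-k)}.
\]
Taking logarithms of the last two factors and applying $\log(1 \pm x) = \pm x - x^2/2 + O(x^3)$ at $x = t/\sqrt n$, the linear contributions combine to $(n-2k)\,t/\sqrt n = -t^2 + O(1/\sqrt n)$ using $n - 2k = -t\sqrt n + O(1)$, while the quadratic contributions combine to $n \cdot t^2/(2n) = t^2/2$, giving a total of $-t^2/2 + O(n^{-1/2})$. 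Exponentiating yields the factor $e^{-t^2/2}$.

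Assembling the three pieces produces $\binom{n}{k} \sim (2^{n+1}/\sqrt{2\pi n})\,e^{-t^2/2}$, which after multiplication by $\sqrt n/2^n$ gives the stated constant $\sqrt{2/\pi}$. The main delicacy is the cancellation in the Taylor expansion: the first-order logarithmic terms do not vanish individually, but rather sum to something of the same order $1$ as the quadratic contribution, because $n-2k$ is exactly of order $\sqrt n$; one must keep two orders in the expansion to read off $-t^2/2$. All other steps are routine bookkeeping, and in particular the nearest-integer rounding of $\tfrac12(n + t\sqrt n)$ is harmless since it perturbs $k$ and $n-k$ by $O(1)$, affecting each factor above by at most $1 + O(1/\sqrt n)$.
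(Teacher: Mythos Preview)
Your proof is correct. The paper does not actually supply a proof of this proposition; it states it as a well-known fact and remarks that ``it can be proved using either Stirling's formula or the Central Limit Theorem.'' You have carried out the Stirling route in full, and the computation is sound: the square-root prefactor, the extraction of $2^n$, and the second-order Taylor expansion combining to $-t^2/2$ are all handled correctly. One small quibble: when you say the rounding ``affect[s] each factor above by at most $1 + O(1/\sqrt n)$,'' this is true for the binomial coefficient as a whole (since $\binom{n}{k+1}/\binom{n}{k} = 1 + O(1/\sqrt n)$ here), but not literally for each individual piece of your decomposition---the factors $(1\pm t/\sqrt n)^{-k}$ separately shift by $O(1)$ in the exponent under a unit change in $k$, and it is only their product that is stable. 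This does not affect the validity of the argument.
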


\subsection{Upper Bound}

\begin{lemma} \label{lem:num-ballot-walks}
The number of ballot walks with $n$ steps is  
$\displaystyle \binom{n - 1}{\ceil{n/2} - 1} \sim \frac{2^n}{\sqrt{2\pi n}}$.
\end{lemma}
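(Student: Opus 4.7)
The plan is to deduce the exact formula directly from Corollary \ref{cor:ballot} and then extract the asymptotic from Proposition \ref{prop:normal}. Both steps are short, so the main work is just verifying that the corollary is set up with the right endpoints and that the normal approximation is applied with the correct index.

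For the exact count, I would invoke Corollary \ref{cor:ballot} with $a = 0$ and $b = n$. The choice $b = n$ is safe because an $n$-step walk cannot end above height $n$, and $a = 0$ is harmless because any walk ending at height $0$ has equal numbers of up and down steps, contributing nothing to the Ballot Theorem count (the factor $\frac{p-q}{p+q}$ vanishes). Substituting gives
\[
	\binom{n-1}{\lceil n/2 \rceil - 1} - \binom{n-1}{n} = \binom{n-1}{\lceil n/2 \rceil - 1},
\]
since $\binom{n-1}{n} = 0$.

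For the asymptotic, I would split by the parity of $n$. When $n$ is odd, $\lceil n/2 \rceil - 1 = (n-1)/2$, so the binomial coefficient is central for $n-1$. When $n$ is even, $\lceil n/2 \rceil - 1 = n/2 - 1$, and by the symmetry $\binom{n-1}{k} = \binom{n-1}{n-1-k}$ this equals $\binom{n-1}{n/2}$. In both cases the lower index can be written as $\tfrac12\bigl((n-1) + t_n\sqrt{n-1}\bigr)$ with $t_n = O(1/\sqrt{n}) \to 0$, so $e^{-t_n^2/2} \to 1$, and Proposition \ref{prop:normal} yields
\[
	\binom{n-1}{\lceil n/2 \rceil - 1} \sim \frac{2^{n-1}}{\sqrt{n-1}} \sqrt{\frac{2}{\pi}} = \frac{2^n}{\sqrt{2\pi(n-1)}} \sim \frac{2^n}{\sqrt{2\pi n}}.
\]

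Since both steps are routine applications of results already stated in the paper, I do not anticipate any real obstacle; the only bookkeeping is keeping track of parity when translating between the ceiling/floor indices and the Gaussian asymptotic.
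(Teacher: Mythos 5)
Your proposal is correct and follows exactly the route the paper intends: its proof of this lemma is the one-line remark that it ``follows directly from Corollary \ref{cor:ballot} and Proposition \ref{prop:normal},'' and you have simply filled in the endpoint choice $a=0$, $b=n$ and the parity bookkeeping. The only point worth noting is that Proposition \ref{prop:normal} is stated for fixed $t$, so applying it with $t_n \to 0$ tacitly uses local uniformity of the limit in $t$, which holds and is at the same level of informality as the paper itself.
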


\begin{proof}
This follows directly from Corollary \ref{cor:ballot} and Proposition \ref{prop:normal}.
\end{proof}

Let $n_0 = \floor{n/2}$ and $n_1 = \ceil{n/2}$. A bidirectional ballot walk is necessarily a ballot walk of length $n_0$ followed by the reverse of a ballot walk of length $n_1$. Therefore, the number of bidirectional ballot walks with $n$ steps is at most
\[
	 O\(\frac{2^{n_0}}{\sqrt {n_0}}\) O\(\frac{2^{n_1}}{\sqrt {n_1}}\) = O\(\frac{2^n}{n}\).
\]
Thus we have proven the following upper bound on $B_n$.

\begin{proposition} \label{prop:lower}
$B_n = O(2^n/n)$.
\end{proposition}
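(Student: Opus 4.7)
The plan is to split each bidirectional ballot walk at its midpoint and observe that each half is independently constrained to be a ballot walk, then multiply using the count from Lemma \ref{lem:num-ballot-walks}. Set $n_0 = \floor{n/2}$ and $n_1 = \ceil{n/2}$, and, given a bidirectional ballot sequence $w$ of length $n$, let $A$ be the sequence formed by its first $n_0$ entries and $B$ the sequence formed by its last $n_1$ entries.

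Next I would verify that both pieces are ballot sequences. Every prefix of $A$ is also a prefix of $w$, hence has strictly more $1$'s than $0$'s, so $A$ is a ballot sequence of length $n_0$. Every suffix of $B$ is a suffix of $w$, hence also has strictly more $1$'s than $0$'s, which is precisely the statement that the reversal of $B$ is a ballot sequence of length $n_1$. In the lattice-walk picture, this is just the observation that rotating the second half by $180^\circ$ about the midpoint makes its starting point the unique lowest point of that piece.

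Therefore the assignment $w \mapsto (A, \mathrm{reverse}(B))$ injects the set of bidirectional ballot sequences of length $n$ into pairs of ballot sequences of lengths $n_0$ and $n_1$. Applying Lemma \ref{lem:num-ballot-walks} to each factor yields
\[
	B_n \;\leq\; O\!\(\frac{2^{n_0}}{\sqrt{n_0}}\) \cdot O\!\(\frac{2^{n_1}}{\sqrt{n_1}}\) \;=\; O\!\(\frac{2^n}{n}\),
\]
since $n_0 + n_1 = n$ and $n_0, n_1 = \Theta(n)$.

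I do not expect any real obstacle for this upper bound: since the bidirectional constraint only \emph{strengthens} the ordinary ballot condition on each half, the bound follows immediately from the decomposition together with the already-established asymptotics for ballot walks. The genuine difficulty — deferred to the matching lower bound — will be to give a sufficient condition on the two halves guaranteeing that when they are glued at the midpoint neither half ``overshoots'' the other, so that enough pairs of ballot walks actually arise from bidirectional ballot walks.
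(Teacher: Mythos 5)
Your proof is correct and follows exactly the paper's argument: split the walk at the midpoint, note that the first half and the reversed second half are each ballot walks, and multiply the counts from Lemma \ref{lem:num-ballot-walks}. No differences worth noting.
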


\subsection{Lower Bound}

We know that the first half and the reverse of the second half of a bidirectional ballot walk are both ballot walks, but this alone is not enough to guarantee that the overall walk is a bidirectional ballot walk. So we place additional constraints on each half of the walk.

\begin{definition}
Let $b$ be a positive integer. A $b$-bounded walk is a ballot walk that never goes into the region $y > 2b$ and ends in the region $y > b$.
\end{definition}

\begin{figure}[ht!]
	\centering
	\begin{tikzpicture}[scale=.3]
	\draw (0,0) 
	 -- ++( 45:5) -- ++(-45:2) -- ++( 45:3)  -- ++(-45:4) -- ++( 45:6) -- ++(-45:2) -- ++( 45:4)  -- ++(-45:1) -- ++( 45:1)  
	 -- ++(-45:7) -- ++( 45:4) -- ++(-45:2)  -- ++( 45:3) -- ++(-45:1);
	\draw (0,0) node[label=left:$0$]{} -- ++(31.8,0);
	\draw[dashed] (0,4) node[label=left:$b$]{} -- ++(31.8,0);
	\draw[dashed] (0,8) node[label=left:$2b$]{} -- ++(31.8,0);
	\draw (0,0) -- ++(0,9);
	\end{tikzpicture}
	
	\caption{An example of a $b$-bounded walk.\label{fig:bounded-walk}}
\end{figure}
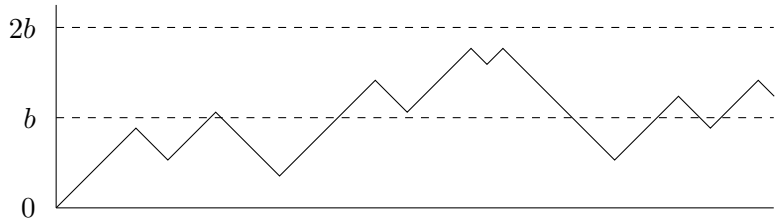

\begin{lemma} \label{lem:catbound}
The concatenation of a $b$-bounded walk followed by the reverse of another $b$-bounded walk is necessarily a bidirectional ballot walk.
\end{lemma}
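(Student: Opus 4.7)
The plan is to verify directly the two defining properties of a bidirectional ballot walk --- that the starting point is the unique lowest and the ending point is the unique highest --- by exploiting the numerical bounds baked into the $b$-bounded definition.

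First I would set up notation. Let $W_1, W_2$ be $b$-bounded walks of lengths $n_0, n_1$ ending at heights $h_1, h_2$. By definition $b < h_i \leq 2b$, and every intermediate height of each walk lies in $[0, 2b]$, with the value $0$ attained only at the starting step. Writing $y_0, \dots, y_{n_0}$ for the heights of $W_1$ and $y'_0, \dots, y'_{n_1}$ for the heights of $W_2$, the concatenation $W$ has height sequence
\[
    y_0,\ y_1,\ \dots,\ y_{n_0}=h_1,\ h_1+h_2-y'_{n_1-1},\ \dots,\ h_1+h_2-y'_0=h_1+h_2,
\]
so $W$ starts at $0$ and ends at $h_1+h_2$.

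Next I would check each extremum. For the minimum: on the first half, the ballot property of $W_1$ already forces $0$ to be attained uniquely at the start; on the second half every height is at least $h_1+h_2-2b$, which is strictly positive because $h_1, h_2 > b$. So $0$ is the unique overall minimum. Symmetrically for the maximum: on the second half, height $h_1+h_2$ is attained uniquely at the end (because $y'_k=0$ only at $k=0$), while on the first half every height is at most $2b < h_1+h_2$. Hence $h_1+h_2$ is the unique overall maximum, and $W$ is a bidirectional ballot walk.

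I do not expect any real obstacle: the definition of $b$-bounded is tailor-made so that the ``ceiling'' $2b$ on each half sits strictly below the endpoint height $h_1+h_2$ that the other half contributes after gluing, and the companion inequality $h_1+h_2-2b>0$ keeps the reversed second half strictly above the floor. The only care needed is bookkeeping the $180^\circ$ rotation that defines the reversal.
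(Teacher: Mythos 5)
Your proof is correct and is essentially the rigorous version of the paper's argument, which is given only as a ``proof by picture'' with the remark that $b$-boundedness prevents either half from overshooting the other. Your inequalities $h_1+h_2-2b>0$ and $2b<h_1+h_2$ are exactly the quantitative content of that picture, so there is nothing further to add.
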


Figure \ref{fig:proof-catbound} is a ``proof by picture'' of the lemma. The $b$-boundedness ensures that neither half overshoots the other.

\begin{figure}[ht!]
\centering
\includegraphics[scale=.8]{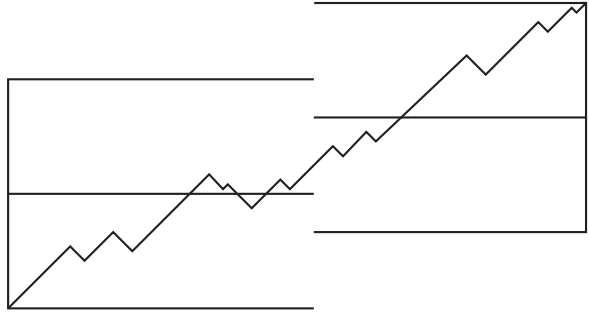}
\caption{``Proof by picture'' of Lemma \ref{lem:catbound}. \label{fig:proof-catbound}}
\end{figure}

\begin{lemma}
The number of $\floor{\sqrt n}$-bounded walks of $n$ steps is $\Omega(2^n/\sqrt n)$.
\end{lemma}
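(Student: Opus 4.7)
My plan combines the Ballot Theorem (Corollary \ref{cor:ballot}) with a reflection argument that handles the upper barrier $y = 2b$, where $b = \lfloor \sqrt{n} \rfloor$.

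First I would count ballot walks of length $n$ ending at each fixed height $h$. By the Ballot Theorem this count equals $\frac{h}{n}\binom{n}{(n+h)/2}$. Writing $h = t\sqrt{n}$ and applying Proposition \ref{prop:normal}, each such term is $\Theta(2^n/n)$ for $t$ bounded away from $0$ and $\infty$. Summing over the $\Theta(\sqrt{n})$ valid heights $h \in (b, 2b]$ (of the correct parity) as a Riemann sum yields
\[
    \sum_{h \in (b, 2b]} \frac{h}{n}\binom{n}{(n+h)/2} \sim \frac{2^n}{\sqrt{n}} \sqrt{\tfrac{2}{\pi}} \int_1^2 \tfrac{t}{2}\, e^{-t^2/2}\, dt,
\]
which is a positive constant multiple of $2^n/\sqrt{n}$. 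So the number of ballot walks of length $n$ with end height in $(b, 2b]$ is already $\Theta(2^n/\sqrt{n})$.

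Next I would show that a positive proportion of these walks additionally stay $\le 2b$. The cleanest route is the method of images (reflection in a strip): after absorbing the forced $+1$ first step, the number of walks from $1$ to $h$ of length $n-1$ confined to $[1, 2b]$ equals an alternating sum
\[
    \sum_{k \in \ZZ} \left[\binom{n-1}{\tfrac{n-2+h}{2} + k(2b+1)} - \binom{n-1}{\tfrac{n+h}{2} + k(2b+1)}\right]
\]
(only finitely many terms are nonzero). For $b \sim \sqrt{n}$ and $h \in (b, 2b]$, each significant term is comparable to $2^n/\sqrt{n}$, and by Proposition \ref{prop:normal} it is approximated by a Gaussian $e^{-c_k^2/2}$ where $c_k$ is shifted by an even multiple of $2$ in rescaled units. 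The $k = 0$ pair dominates and contributes a fixed positive fraction of the unconstrained ballot count; higher-$|k|$ terms decay Gaussianly and yield a convergent correction. Summing the confined count over $h \in (b, 2b]$ then gives $\Omega(2^n/\sqrt{n})$.

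The main obstacle I expect is keeping track of signs and shifts in the method-of-images formula and verifying that the dominant terms combine to a positive limit rather than cancel — the Gaussian tails and the alternating signs must be reconciled carefully. As a fallback, one can appeal to Donsker's invariance principle: the rescaled ballot walk converges weakly to a Brownian meander $m(t)$, and the event $\{\max_t m(t) \leq 2,\ m(1) > 1\}$ is a continuity set of the meander distribution with strictly positive probability, which transfers via the local central limit theorem to $\Omega(2^n/\sqrt{n})$ in the discrete count.
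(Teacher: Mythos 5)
Your outline is correct and shares the paper's skeleton --- count ballot walks of length $n$ ending at height $h \in (b, 2b]$ with $b = \floor{\sqrt n}$, check via the normal approximation (Proposition \ref{prop:normal}) that this is $\Theta(2^n/\sqrt n)$, then show the ceiling $y \leq 2b$ only removes a fraction bounded away from $1$ --- but you execute the barrier step by a genuinely heavier route. The paper never writes down an exact confined count: it applies a \emph{single} reflection (about $y = 2b+1$, reflecting the tail of the walk after its last visit to that line) to injectively map each barrier-violating walk to a ballot walk ending at height $\geq 2b+2$, so the bad walks number at most $\binom{n-1}{\ceil{n/2}+b}$, and the lower bound becomes a clean three-term expression whose rescaled limit is $\frac{1}{\sqrt{2\pi}}(e^{-1/2}-2e^{-2}) > 0$. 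Your full method-of-images formula in the strip $[1,2b]$ is stated correctly (the shifts $k(2b+1)$ are right), and the sign bookkeeping you flag as the main obstacle does work out: in rescaled units the $k=-1$ images sit near $t-4$, so their total contribution over $h \in (b,2b]$ is about $e^{-2}-e^{-9/2}$ against a main term of $e^{-1/2}-e^{-2}$, leaving a positive constant. What the exact formula buys is sharper constants (indeed it is the route to the refined asymptotics mentioned in Section \ref{sec:conclusion}); what the paper's one-sided reflection buys is that you never have to reconcile an alternating series at all --- an upper bound on the violators suffices for an $\Omega$ statement, which is exactly the difficulty your approach must confront head-on. Your Donsker/meander fallback is also valid (the event $\{\max m \leq 2,\ m(1) > 1\}$ is a continuity set of positive meander probability, and multiplying by the ballot-walk count from Lemma \ref{lem:num-ballot-walks} gives the rate, so a local CLT is not even needed), but it invokes the convergence of the conditioned walk to the meander, which is far heavier machinery than the elementary binomial estimates the paper uses.
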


\begin{proof}
We see that $b$-bounded walks of $n$ steps are precisely ballot walks that end in the region $b+1 \leq y \leq  2b$ and never go into the region $y > 2b$. Using Corollary \ref{cor:ballot}, we see that the number of ballot walks with $n$ steps that end in $b+1 \leq y \leq  2b$ is equal to
\[
	\binom{n-1}{ \ceil {\frac12 (n+b-1)}} - \binom{n-1}{ \floor {n/2 } + b}.
\]

Now we need to consider those ballot walks that end in $b < y \leq  2b$ but go into $y > 2b$ at some point in the walk. Let $(t, 2b+1)$ be the last point in walk that is in the region $y > 2b$. We can reflect the portion of the walk after that point to get a ballot walk that ends in $y > 2b + 1$. See Figure \ref{fig:reflect} for an illustration. This map is injective since we can always get back to the original walk, but it is not necessarily onto. Then, we know that the number of ballot walks that end in $b < y \leq  2b$ but go into $y > 2b$ at some point is at most the number of ballot walks that end in $y \geq 2b + 2$. By Corollary \ref{cor:ballot}, the number of ballot walks that end in $y \geq 2b + 2$ is equal to $\binom{n-1}{\ceil{n/2} + b}$.

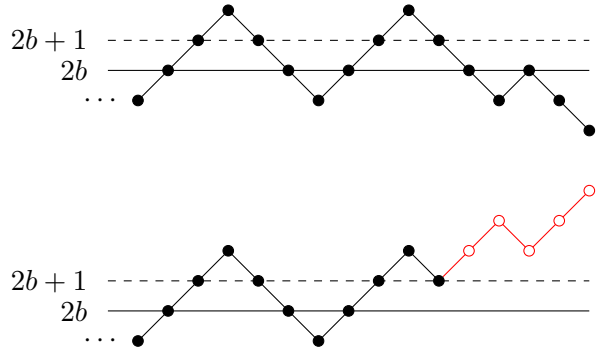
\begin{figure}[ht!] \centering
	\tikzstyle{P}=[draw,circle, fill=black, minimum size=4pt,inner sep=0pt]
	\tikzstyle{Q}=[draw,circle, fill=white, minimum size=4pt,inner sep=0pt]
	\begin{tikzpicture}[scale=.4]
	\coordinate (1) at (1,1);
	\coordinate (0) at (1,-1);
	\begin{scope}	
		\draw (1, -1) node[P, label=left:$\cdots$]{}
			\up\up\up\down\down\down\up\up\up\down\down\down\up\down\down;
		\draw (0,0) node[label=left:$2b$] {} -- (16, 0);
		\draw[dashed] (0,1) node[label=left:$2b+1$] {}  -- ++(16, 0);
	\end{scope}
	
	\begin{scope}[shift={(0,-8)}]
		\coordinate (1) at (1,1);
		\coordinate (0) at (1,-1);
		\draw (1, -1) node[P, label=left:$\cdots$]{}
			\up\up\up\down\down\down\up\up\up
		   -- ++(0) node (x) [P]{};
		\draw[red] (x) -- ++(1) node[Q]{} -- ++(1)node[Q]{} -- ++(0)node[Q]{} -- ++(1)node[Q]{} -- ++(1)node[Q]{};
		\draw (0,0) node[label=left:$2b$] {} -- (16, 0);
		\draw[dashed] (0,1) node[label=left:$2b+1$] {}  -- ++(16, 0);
	\end{scope}
	
	\end{tikzpicture}
	\caption{Reflecting the last segment of a walk. \label{fig:reflect}}
\end{figure}

Therefore, the number of $b$-bounded walks is at least
\[
	\binom{n-1}{ \ceil {\frac12 (n+b-1)}} - \binom{n-1}{ \floor {n/2 } + b} - \binom{n-1}{\ceil{n/2} + b}.
\]
Let $b = \floor{\sqrt n}$. Using Proposition \ref{prop:normal}, we have
\[
	\lim_{n \to \infty}  \frac{\sqrt{n}}{2^n}\( \binom{n-1}{ \ceil {\frac12 (n+b - 1)}} - \binom{n-1}{ \floor {n/2 } + b} - \binom{n-1}{\ceil{n/2} + b}\) = \frac{1}{\sqrt{2\pi}}(e^{-1/2} - 2e^{-2}) > 0.
\]
It follows that the number of $\floor{\sqrt n}$-bounded walks is $\Omega(2^n/\sqrt n)$.
\end{proof}

As before, we can form bidirectional ballot walks by concatenating two $b$-bounded walks, where the second half is reversed. Let $n_0 = \floor{n/2}$ and $n_1 = \ceil{n/2}$. Then, the number of bidirectional ballot walks is at least 
\[
	\Omega\(\frac{2^{n_0}}{\sqrt {n_0}}\)\Omega\(\frac{2^{n_1}}{\sqrt {n_1}}\) = \Omega(2^n/n).
\]
Thus we have proven the following.

\begin{proposition} \label{prop:upper}
$B_n = \Omega(2^n/n)$.
\end{proposition}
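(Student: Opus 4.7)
The plan is to combine the two preceding lemmas of this subsection into a direct product-counting argument. The concatenation lemma (Lemma \ref{lem:catbound}) says that if we glue a $b$-bounded walk to the reverse of another $b$-bounded walk (for a common $b$), the result is automatically a bidirectional ballot walk. The counting lemma then guarantees $\Omega(2^m/\sqrt m)$ many $\lfloor\sqrt{m}\rfloor$-bounded walks of any given length $m$. So the lower bound $B_n = \Omega(2^n/n)$ should drop out of multiplying two such counts, one per half.

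Concretely, first I would split the total length by setting $n_0 = \floor{n/2}$ and $n_1 = \ceil{n/2}$, so that $n_0 + n_1 = n$. Next I would fix a single bound, say $b = \floor{\sqrt{n_0}}$, and use it for both halves; because $n_1 - n_0 \leq 1$, this $b$ is still $\Theta(\sqrt{n_1})$, which is what the counting lemma needs to produce $\Omega(2^{n_1}/\sqrt{n_1})$ many $b$-bounded walks of length $n_1$ (and analogously $\Omega(2^{n_0}/\sqrt{n_0})$ many of length $n_0$).

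Then I would invoke Lemma \ref{lem:catbound}: every pair consisting of a $b$-bounded walk $W_0$ of length $n_0$ and a $b$-bounded walk $W_1$ of length $n_1$ yields, by concatenating $W_0$ with the reverse of $W_1$, a bidirectional ballot walk of length $n$. Distinct pairs yield distinct walks (the first $n_0$ steps recover $W_0$, the remaining $n_1$ recover $W_1$ after reversing), so this gives an injection from the product set into the set of bidirectional ballot walks. Hence
\[
    B_n \;\geq\; \Omega\!\left(\frac{2^{n_0}}{\sqrt{n_0}}\right)\Omega\!\left(\frac{2^{n_1}}{\sqrt{n_1}}\right) \;=\; \Omega\!\left(\frac{2^n}{n}\right),
\]
as desired.

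The only real subtlety, and the one thing I would verify carefully, is the legitimacy of using a common bound $b$ in both halves while still retaining the asymptotic counts on each side. Since $n_0$ and $n_1$ differ by at most one, the choice $b = \floor{\sqrt{n_0}}$ is within an additive constant of $\floor{\sqrt{n_1}}$, so the counting lemma yields the required $\Omega(2^{n_i}/\sqrt{n_i})$ bounds for both $i=0$ and $i=1$. Everything else is routine bookkeeping and follows directly from the lemmas already established.
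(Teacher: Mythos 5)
Your proof is correct and follows essentially the same route as the paper: split at the midpoint, apply the counting lemma to get $\Omega(2^{n_i}/\sqrt{n_i})$ bounded walks on each half, and glue via Lemma \ref{lem:catbound}. Your explicit care about using a single common $b$ for both halves (when $n$ is odd) is a detail the paper passes over silently, but it is handled exactly as you suggest.
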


Propositions \ref{prop:lower} and \ref{prop:upper} together complete the proof of Proposition \ref{prop:B_n-weak} and hence also Theorem \ref{thm:MSTD-constr}.

\section{Further remarks} \label{sec:conclusion}

We believe that there is more potential to bidirectional ballot sequences than what it presented here. Knowing that $B_n = \Theta\(2^n / n\)$, we can ask whether the ratio $nB_n/2^n$ approaches a limit. Table \ref{tab:ratio} contains some values computed from an exact formula for $B_n$. The data suggest that $nB_n/2^{n-2} \to 1$. This is indeed true. We have a proof of this fact, but our proof is rather long and technical, so we do not present it here. The proof involves first finding an exact formula for $B_n$ using repeated applications of the reflection principle, and then some analysis to estimate the sum. The data in Table \ref{tab:ratio} also suggest the asymptotic expansion
\[
	\frac{B_{n}}{2^n} = \frac{1}{4n} + \frac{1}{6n^2} + O\(\frac{1}{n^3}\),
\]
which we pose as a conjecture.

\begin{table}[ht!] \centering
	\caption{Some values of $nB_n / 2^{n-2}$.\label{tab:ratio}}
	\begin{tabular}{cl}
		\toprule
		$n$ & $n B_n / 2^{n-2}$ \\
		\midrule
		100 & 1.0067268\dots \\
		1000 & 1.00066729\dots \\
		10000 & 1.0000666729\dots \\
		\bottomrule
	\end{tabular}	
\end{table}

Bidirectional ballot sequences look superficially similar to Dyck paths and Catalan numbers. However, the former lack the nice enumerative properties enjoyed by the latter two. There does not seem to be any simple recursive structure in bidirectional ballot sequences, and we were unable to find any useful recurrence relations or generating functions for $B_n$. This is what makes the enumeration of bidirectional ballot sequences particularly difficult.

We can interpret bidirectional ballot sequences in terms of random walks. Suppose we take a random walk of $n$ steps in $\ZZ$ where each step independently moves one unit to the left or the right, each with $1/2$ probability. Let $p_n$ denote the probability that, among all the points visited by the walk, the starting point is minimum and the ending point is maximum. Then $p_n = B_{n+2}/2^n \sim 1/n$ as $n \to \infty$.

Were it the case that $p_n \sim c/n$ for any other constant $c$, then perhaps the result might be much less interesting\footnote{Indeed, if we only require the starting point to be minimum, then it is easy to show that $p_n \sim \sqrt{\frac{2}{\pi n}}$; the constants here are not nearly as nice.}. However, as it stands, we feel that $p_n \sim 1/n$ is not merely a coincidence, and we believe that it deserves a better explanation then the calculation-heavy proof that we have. There should be some natural, combinatorial explanation, perhaps along the lines of grouping all possible walks into orbits of size mostly $n$ under some symmetry, so that almost every orbit contains exactly one walk with the desired property. So far, we do not know of any such explanation.

We are also currently investigating higher dimensional analogues of this type of random walk problems. We have some experimental data that suggest the prevalence of the $1/n$ asymptotics for analogous walks in higher dimensions. We currently have no proof or explanation of this phenomenon.

The asymptotics related to bidirectional ballot sequences are very intriguing, and we hope to generate more interest in these objects.

\section*{Acknowledgments}

This research was carried out at the University of Minnesota Duluth under the supervision of Joseph Gallian with the financial support of the National Science Foundation and the Department of Defense (grant number DMS 0754106), the National Security Agency (grant number H98230-06-1-0013), and the MIT Department of Mathematics. The author would like to thank Joseph Gallian for his encouragement and support. The author would also like to thank Nathan Kaplan and Ricky Liu for reading the paper and making valuable suggestions.



\bibliographystyle{amsplain}
\bibliography{references}

\end{document}